\newtheorem{thm}{Theorem}[section]
\newtheorem{lem}[thm]{Lemma}
\newtheorem{remark}[thm]{Remark}
\newtheorem{conj}[thm]{Conjecture}
\newcommand{\F}{\mathbb F}
\newcommand{\hs}{\hspace{1em}}
\author{

Vladislav Taranchuk\footnotemark[1]}
\title{A simple proof for the lower bound of the girth of graphs $D(n, q)$}
\begin{document}

\maketitle
\footnotetext[1]{Department of Mathematical Sciences, University of Delaware, Newark, DE 19716-2553, USA, {\tt vladtar@udel.edu}.}

\begin{abstract}
    The components of the graphs $D(n, q)$ provide the best-known general lower bound for the number of edges in a graph with $n$ vertices and no cycles of length less than $g$. In this paper, we give a new, short, and simpler proof of the fact that the length of the shortest cycle appearing in $D(n, q)$ is $n + 5$ when $n$ is odd, and $n + 4$ when $n$ is even. 
\end{abstract}

\section{Introduction and Motivation}\label{Section_intro}

Let $\Gamma$ be a graph with vertex set $V(\Gamma)$ and edge set $E(\Gamma)$. We use standard terminology from graph theory, see, e.g., Bollob\'{a}s \cite{B}. We refer to $|V(\Gamma)|$ and $|E(\Gamma)|$ as the order and the size of $\Gamma$, respectively. A graph is called $q$-regular if every vertex has exactly $q$ neighbors. All the graphs we consider are $q$-regular and simple (i.e., undirected, without loops and multiple edges). The girth of a graph containing at least one cycle is the length of the shortest cycle in the graph.

The graphs $D(n, q)$ are part of a broad class of graphs referred to in the literature as as \textit{algebraically defined graphs}, or \textit{graphs defined by systems of equations}, see Lazebnik and Woldar \cite{LW},  Lazebnik, Sun, and Wang \cite{LSW}. The graphs in this class are defined as follows: Let $n \geq 2$ and $P= L = \mathbb{F}_q^{n}$ be two copies of the $n$-dimensional vector space over the finite field $\mathbb{F}_q$ of order $q = p^e$, where $p$ is prime. Call the elements of $P$ points and the elements of $L$ lines, with the distinction in notation: For $a \in \F_q^n$, $(a) \in P$ and $[a] \in L$. The point $(0, 0, \dots, 0)$ will be denoted by $(0)$.
For each $i$, $2 \leq i \leq n$, let $f_i$ be a polynomial of $2i - 2$ variables over $\F_q$.
Define $\Gamma_q = \Gamma_q(f_2, f_3, \dots, f_n)$ to be the bipartite graph with parts $P$ and $L$ and with edge relation defined as follows: If $(p) = (p_1, \dots, p_{n})\in P$ and $[\ell] = [\ell_1, \dots, \ell_{n}]$, then $(p)$ is adjacent to $[l]$ if and only if 
\begin{align*}
\ell_2 + p_2 &= f_2(\ell_1, p_1), \\
\ell_3 + p_3 &= f_3(\ell_1, p_1, \ell_2, p_2), \\
\vdots \\
\ell_{n} + p_{n} &= f_n(\ell_1, p_1, \dots, \ell_{n-1}, p_{n-1}).
\end{align*}

\noindent This class of graphs has connections and applications to finite geometry, extremal graph theory, cryptography, and coding theory \cite{U05, LW, LSW}.

The graphs $D(n, q)$ are a family of algebraically defined graphs that were constructed by Lazebnik and Ustimenko in 1994 \cite{LU}. These graphs are defined by a system of $n - 1$ equations which give the following adjacency relation between a point $(p) = (p_1, \dots, p_n)$ and a line $[\ell] = [\ell_1, \dots, \ell_n]$:
\begin{align*}
    &p_2 + \ell_2 = p_1\ell_1 \\
    &p_3 + \ell_3  = p_1\ell_2 \\
    &\text{and for }4 \leq j \leq n: \\
    &p_{j} + \ell_{j}  = \left\{  \begin{array}{ll}
      p_{j-2}\ell_1   & \mbox{if $j \equiv 0, 1\pmod{4}$} \\
      p_1\ell_{j-2}   & \mbox{if $j \equiv 2, 3\pmod{4}$}
    \end{array}
    \right. 
\end{align*}
\noindent For $n \geq 6$, the graphs $D(n, q)$ are known to be disconnected and their components are completely described, see Lazebnik, Ustimenko, and Woldar \cite{LUW2}, Lazebnik and Viglione \cite{LV}. Furthermore, $D(n, q)$ is known to be transitive on its set of points and set of lines \cite{LU}. In the same paper, the graphs $D(n, q)$ were shown to have girth at least $n + 5$ when $n$ is odd, and $n+4$ when $n$ is even. These bounds have been show to be tight in many cases, see F\"{u}redi, et. al, \cite{FLSUW}, Cheng, Chen, and Tang \cite{CCT1, CCT2}, Cheng, Tang, and Xu \cite{CTX}. The original proof of the lower bound for the girth in \cite{LU} was rather technical, and it was presented in a somewhat simpler way with new notation in \cite{LSW}. In this paper we will provide a shorter and simpler proof of this fact, which was previously attempted by Ustimenko \cite{UP}.

Let $\mathcal{F}$ be a set of graphs. The \textit{Tur\'{a}n number} of $\mathcal{F}$, denoted $ex(n, \mathcal{F})$, is the maximum number of edges that a graph on $n$ vertices can have such that it does not contain any graph in $\mathcal{F}$ as a subgraph. Let $\mathcal{C}_k$ be the set of all cycles with length less than or equal to $k$. One of the most notorious open questions in graph theory asks to determine the functions $ex(n, \mathcal{C}_{k})$ (i.e. maximum number of edges in a graph on $n$ vertices with girth at least $k$). To date, the best-known general bounds are
\begin{align}\label{Eq1}
   \frac{1}{2^{1 + 1/k}}n^{1 + 2/(3k-3 +\epsilon)} \leq ex(n, \mathcal{C}_{2k+1}) \leq \frac{1}{2^{1 + 1/k}}n^{1 + 1/k} + \frac{1}{2}n
\end{align}
where $\epsilon = 1$ if $k$ is even and $0$ if $k$ is odd. An upper bound of the same magnitude was originally established almost 50 years ago by Bondy and Simonovits \cite{BS}. The upper bound in (\ref{Eq1}) is due to Alon, Hoory, and Linial \cite{AHL}. The lower bound is obtained by the components of the graphs $D(n, q)$ which are denoted $CD(n, q)$. This discovery lead to an improvement over the previously best-known lower bound given by Lubotzky, Philips, and Sarnak \cite{LPS},  Margulis \cite{Ma}, and Morgenstern \cite{MOR}. 

When $k = 2, 3, 5$, more precise results are known regarding $ex(n, \mathcal{C}_{2k+1})$. In fact, the graphs with the maximal edge count are known for infinitely many values of $n$, and these graphs come from finite geometries, namely generalized polygons. For more history and information regarding $ex(n, \mathcal{C}_{k})$ and related problems, see F\"{u}redi and Simonovits \cite{FS}, \cite{LSW}.



Before proceeding to the proof of the main results, we require a few more definitions and comments. A \textit{covering map} $\Phi$ from the graph $\Gamma$ onto a graph $\Gamma'$ is a surjection $\Phi: V(\Gamma) \rightarrow V(\Gamma')$ such that the neighborhood of each $x \in V(\Gamma)$ is mapped bijectively onto the neighborhood of $\Phi(x)$. If a covering map from $\Gamma$ to $\Gamma'$ exists, we say that $\Gamma$ is a \textit{lift} of $\Gamma'$. If $\Phi: \Gamma \rightarrow \Gamma'$ is a covering map, then it is immediate that:
\begin{itemize}
    \item If $C \subset \Gamma$ is a cycle, then $\Phi(C)\subset \Gamma'$ contains a cycle with length at most the length of $C$.
    \item The girth of $\Gamma$ is at least the girth of $\Gamma'$.
\end{itemize}
\begin{remark}
    Let $2 \leq m < n$ and consider algebraically defined graphs $\Gamma = \Gamma_q(f_2, f_3, \dots, f_n)$ and $\Gamma' = \Gamma_q(f_2, f_3, \dots, f_m)$. Then, it is known that $\Gamma$ is a lift of $\Gamma'$ with the covering map $\Phi((p_1, \dots, p_n)) = (p_1, \dots, p_m)$ and $\Phi([\ell_1, \dots, \ell_n])$ $= [\ell_1, \dots, \ell_m]$ \cite[Section 3.2, Theorem 2]{LSW}.
\end{remark}

Here we present in a nutshell the main ideas used in the proof of our main result. A new family of algebraically defined graphs, denoted $A(n, q)$, $n \geq 2$, was introduced by Ustimenko in \cite[p. 467]{U07}, where they were denoted by $E(n, k)$, and later in \cite{U13, U22}, where they were denoted by $A(n, q)$. Though the girth of $A(n, q)$ was not as was hoped in \cite{U22}, Proposition 3.1 in this preprint states that the point $(0)$ is not contained in a cycle of length less than $2n$. In fact, the key theorem of our paper improves upon this, demonstrating that the point $(0)$ in $A(n, q)$ is not contained in any cycle of length less than $2n + 2$ (which was also suggested in \cite{U13}). We then demonstrate that there is a covering map from $D(2k + 1, q)$ to $A(k+2, q)$ which maps the point $(0)$ in $D(2k + 1, q)$ to the point $(0)$ in $A(k + 2, q)$. Finally, by transitivity of $D(2k + 1, q)$ on the set of points and properties of covering maps, we immediately obtain that no cycle of length less than $2k + 6$ can appear in $D(2k + 1, q)$.

\section{The graphs $A(n,q)$ and $D(n, q)$}
The algebraically defined graph $A(n, q)$ has edge relation defined as follows: For $2 \leq j \leq n$,
\begin{align*}
    &p_{j} + \ell_{j}  = \left\{  \begin{array}{ll}
      p_{j - 1}\ell_1   & \mbox{if $j$ is even} \\
      p_1\ell_{j - 1}   & \mbox{if $j$ is odd}
    \end{array}
    \right.
\end{align*}
The following lemma, though not explicitly, appears in \cite[p. 467-468]{U07}. For our purposes, we make the lemma explicit and provide a proof.
\begin{lem}
    Let $k \geq 1$ and $\phi: \F_q^{2k + 1} \rightarrow \F_q^{k + 2}$ where $\phi(x) = x'$ is defined coordinate-wise as follows:
    \begin{align*}
    x'_1 &= x_1 \\
    x'_2 &= x_2   \\
    x'_{j + 2} &= x_{2j + 1} \hspace{1em} \text{for } 1 \leq j \leq k
\end{align*}
Then $\Phi: D(2k + 1, q) \rightarrow A(k + 2, q)$, defined by $\Phi((p)) = (\phi(p))$ and $\Phi([\ell]) = [\phi(\ell)]$, is a covering map.
\end{lem}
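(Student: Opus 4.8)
The plan is to verify directly that $\Phi$ carries edges of $D(2k+1,q)$ to edges of $A(k+2,q)$, and then to check the local bijectivity condition. Since $D(2k+1,q)$ is bipartite with parts $P$ and $L$ and $\Phi$ respects this bipartition (points go to points, lines to lines), it suffices to show: (i) if $(p)\sim[\ell]$ in $D(2k+1,q)$ then $(\phi(p))\sim[\phi(\ell)]$ in $A(k+2,q)$; and (ii) for a fixed point $(p)$ of $D(2k+1,q)$, the map $[\ell]\mapsto[\phi(\ell)]$ restricts to a bijection from the neighborhood of $(p)$ onto the neighborhood of $(\phi(p))$ (and symmetrically with the roles of points and lines interchanged).

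For (i), write $(p)'=\phi(p)$ and $[\ell]'=\phi(\ell)$, so $p'_1=p_1$, $p'_2=p_2$, and $p'_{j+2}=p_{2j+1}$ for $1\le j\le k$ (likewise for $\ell$). The first $A$-equation, $p'_2+\ell'_2=p'_1\ell'_1$, is literally the first $D$-equation $p_2+\ell_2=p_1\ell_1$. For $2\le j\le k$, the $A$-equation in coordinate $j+2$ reads $p'_{j+2}+\ell'_{j+2}=p'_{j+1}\ell'_1$ when $j+2$ is even and $=p'_1\ell'_{j+1}$ when $j+2$ is odd; translating through $\phi$, this becomes $p_{2j+1}+\ell_{2j+1}=p_{2j-1}\ell_1$ or $=p_1\ell_{2j-1}$ respectively. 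The point is that these are exactly the $D(2k+1,q)$ equations in the odd coordinate $2j+1$: one checks that $2j+1\equiv 0,1\pmod 4$ precisely when $j+2$ is even (i.e. $j$ even), giving the $p_{2j-1}\ell_1$ form, and $2j+1\equiv 2,3\pmod 4$ precisely when $j+2$ is odd, giving the $p_1\ell_{2j-1}$ form. So every $A$-equation pulls back to one of the $D$-equations in an odd-indexed coordinate; the even-indexed $D$-equations are simply not used, which is fine — they only further constrain the edge and do not obstruct the implication. (One should also confirm that $A(k+2,q)$'s equation for coordinate $3$, namely $p'_3+\ell'_3=p'_1\ell'_2$, matches the $D$-equation $p_3+\ell_3=p_1\ell_2$, which it does since $p'_3=p_3$ corresponds to $j=1$.)

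For (ii), fix a point $(p)$ in $D(2k+1,q)$. Its neighbors $[\ell]$ are determined freely by the choice of $\ell_1\in\F_q$: once $\ell_1$ is chosen, $\ell_2,\ell_3,\dots,\ell_{2k+1}$ are forced successively by the $D$-equations, so there are exactly $q$ neighbors, one per value of $\ell_1$. Likewise, the neighbors of $(\phi(p))$ in $A(k+2,q)$ are parametrized freely by $\ell'_1\in\F_q$, again $q$ of them. Since $\Phi([\ell])=[\phi(\ell)]$ has $(\phi(\ell))_1=\ell_1$, the map on neighborhoods is $\ell_1\mapsto\ell'_1=\ell_1$, which is a bijection $\F_q\to\F_q$; hence $\Phi$ maps the neighborhood of $(p)$ bijectively onto that of $(\phi(p))$. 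The argument for the neighborhood of a line $[\ell]$ is identical with $p_1$ as the free parameter. Together with surjectivity of $\Phi$ (clear: every $(x')\in\F_q^{k+2}$ is $\phi$ of some $x\in\F_q^{2k+1}$, e.g. by choosing the even-indexed entries of $x$ arbitrarily), this establishes that $\Phi$ is a covering map.

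The only real bookkeeping obstacle is the congruence check in step (i): one must be careful that the index translation $j\mapsto 2j+1$ sends the ``$j$ even / $j$ odd'' dichotomy governing $A(k+2,q)$ to the ``$2j+1\equiv 0,1$ / $2,3\pmod 4$'' dichotomy governing $D(2k+1,q)$, and that the two low-order equations ($p_2+\ell_2=p_1\ell_1$ and $p_3+\ell_3=p_1\ell_2$) line up correctly with the $j=0$-type and $j=1$ cases. I expect this to be a short finite verification rather than a genuine difficulty.
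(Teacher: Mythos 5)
Your proposal is correct and follows essentially the same route as the paper: verify that the odd-indexed $D(2k+1,q)$ equations pull back to exactly the $A(k+2,q)$ equations via the congruence $2j+1\equiv 1\pmod 4$ iff $j$ is even, and then use the fact that neighborhoods in these graphs are parametrized bijectively by the first coordinate, which $\phi$ preserves. The only cosmetic difference is that you re-derive this last parametrization fact inline, whereas the paper cites it as a known theorem about algebraically defined graphs.
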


\begin{proof}  
    It is clear that $\Phi$ is a surjection.
    Suppose that $(p)$ is adjacent to $[\ell]$ in $D(2k + 1, q)$. Then it must be true that 
    \begin{align*}
    p_2 + \ell_2 &= p_1\ell_1 \\
    p_3 + \ell_3 & = p_1\ell_2 \\
    \text{and for } &2 \leq j \leq k:\\
    p_{2j+ 1} + \ell_{2j + 1} & = \left\{  \begin{array}{ll}
      p_{2j - 1}\ell_1   & \mbox{if $j$ is even} \\
      p_1\ell_{2j - 1}   & \mbox{if $j$ is odd}
    \end{array}
    \right.
\end{align*}
But then applying $\Phi$, where $\Phi((p)) = (p')$ and $\Phi([\ell]) = [\ell']$, we obtain 
\begin{align*}
    p_2' + \ell_2' = p_2 + \ell_2 &= p_1\ell_1  = p_1'\ell_1'\\
    p_3' + \ell_3' = p_3 + \ell_3 & = p_1\ell_2 = p_1'\ell_2' \\
    \text{and for } 2 \leq j \leq k& \\
    p_{j+2}' + \ell_{j + 2}'  = p_{2j+ 1} + \ell_{2j + 1} & = \left\{  \begin{array}{ll}
      p_{2j - 1}\ell_1   & \mbox{if $j$ is even} \\
      p_1\ell_{2j - 1}   & \mbox{if $j$ is odd}
    \end{array}
    \right\} = \left\{  \begin{array}{ll}
      p_{j + 1}'\ell_1'   & \mbox{if $j$ is even} \\
      p_1'\ell_{j+1}'   & \mbox{if $j$ is odd}
    \end{array}
    \right.
\end{align*}
Thus $\Phi((p))$ is adjacent to $\Phi([\ell])$, so $\Phi$ preserves adjacency. If $v$ is a vertex in an any algebraically defined graph constructed over $\F_q$, then for each $x \in \F_q$, $v$ has a unqiue neighbor whose first coordinate is $x$ \cite[Theorem 1]{LSW}. Since the first coordinate of $v \in \F_q^{2k + 1}$ and $\phi(v) \in \F_q^{k+2}$ are the same, then for any vertex $v$ of $D(2k  + 1, q)$, we have that $\Phi$ must be a bijection from the neighborhood of $v$ to the neighborhood of $\Phi(v)$. Thus $\Phi$ is a covering map.
\end{proof}

\begin{lem}
    Let $n \geq 2$ and $(0)$ be in $A(n, q)$. Then a vertex at distance $j$, $2 \leq j \leq n$, from $(0)$ is of the form:
    \begin{align}
    &[\ell_1, \ell_2, \dots, \ell_{j}, 0, \dots, 0], \hspace{1em} \ell_{j-1} \neq 0 \hspace{1cm} \text{if $j$ is odd}, \\
    &(p_1, p_2, \dots, p_{j}, 0, \dots, 0), \hspace{1em} p_{j -1} \neq 0 \hspace{1cm} \text{if $j$ is even}.
\end{align}
\end{lem}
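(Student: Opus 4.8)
The plan is to induct on the distance $j$. Two preliminary observations set up the induction. First, plugging $(p)=(0)$ into the defining equations of $A(n,q)$ forces $\ell_j=0$ for all $j\ge 2$, so the vertices at distance $1$ from $(0)$ are exactly the lines $[\ell_1,0,\dots,0]$, $\ell_1\in\F_q$, and $(0)$ is the unique vertex at distance $0$. Second, I will use the fact recalled above from \cite[Theorem~1]{LSW} that in any algebraically defined graph over $\F_q$ a vertex has exactly one neighbor with a prescribed first coordinate; equivalently, a vertex together with one of its neighbors determines all the remaining coordinates of that neighbor. The base case $j=2$ is a direct computation: a point at distance $2$ is a neighbor other than $(0)$ of some $[\ell_1,0,\dots,0]$; substituting that line into the equations gives $p_2=p_1\ell_1$ and, by an immediate secondary induction, $p_i=0$ for $i\ge 3$, while $p_1\ne 0$ since otherwise $(p)=(0)$.

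For the inductive step, take $3\le j\le n$, assume the statement for all smaller distances (and the two preliminary facts for distances $0$ and $1$), and let $v$ be at distance $j$ from $(0)$; by bipartiteness $v$ is a line when $j$ is odd and a point when $j$ is even. Fix a shortest path from $(0)$ to $v$, let $u$ be its penultimate vertex (distance $j-1$) and $w$ the vertex before $u$ (distance $j-2$), so $v\ne w$. By the inductive hypothesis, $u$ is supported in coordinates $1,\dots,j-1$ with coordinate $j-2$ nonzero, and $w$ is supported in coordinates $1,\dots,j-2$; in particular the coordinate of $w$ in position $j-1$ vanishes. I will carry out the case $j$ odd and note that $j$ even is its verbatim point/line, even/odd dual. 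Write $u=(p)$, $v=[\ell]$, $w=[m]$. A secondary induction on $i=j+1,\dots,n$ using the defining equation in coordinate $i$ — whose right-hand side is either $p_{i-1}\ell_1$ with $p_{i-1}=0$, or $p_1\ell_{i-1}$ with $\ell_{i-1}=0$ already known — gives $\ell_i=0$, so $v$ is supported in $1,\dots,j$.

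It remains to show $\ell_{j-1}\ne 0$, and this is the one point that is not just bookkeeping. Comparing the defining equation in coordinate $j-1$ for the adjacency $u\sim v$ with the same equation for $u\sim w$ (whose position-$(j-1)$ coordinate is $0$) yields $\ell_{j-1}=p_{j-2}(\ell_1-m_1)$. If $\ell_1=m_1$, then $v$ and $w$ would be two neighbors of $u$ with the same first coordinate, hence equal, contradicting $v\ne w$; so $\ell_1\ne m_1$, and since $p_{j-2}\ne 0$ we conclude $\ell_{j-1}\ne 0$. This is exactly the claimed form, and in the case $j$ even the identical argument with coordinate $j-1$ (now an odd-index equation) gives $p_{j-1}=(p_1-m_1)\ell_{j-2}\ne 0$.

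The main obstacle, then, is precisely the non-vanishing of the coordinate in position $j-1$: the form of the parent $u$ alone does not pin it down, and the device that resolves this cleanly is to bring in the grandparent $w$ on the shortest path together with the uniqueness of neighbors by first coordinate. Everything else — the vanishing tail, the base case, and the odd/even symmetry — is a routine unwinding of the defining equations, and I would only remark in passing that the argument in fact yields slightly more (for instance $\ell_j=p_1\ell_{j-1}$ when $j$ is odd), though only the stated form is needed in the sequel.
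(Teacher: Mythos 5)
Your proof is correct and takes essentially the same route as the paper's: induction on the distance, the identity expressing the new nonzero coordinate as (the parent's nonzero coordinate) times (a difference of first coordinates), and the uniqueness of a neighbor with a prescribed first coordinate to make that difference nonzero. The only cosmetic difference is direction — you walk backward from a vertex at distance $j$ to its parent and grandparent on a shortest path, while the paper propagates forward from a distance-$j$ vertex to its other neighbors (and must therefore add one extra line ruling out that such a neighbor lies at distance $j-1$) — but the computation and the inputs are identical.
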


\begin{proof}
    For a fixed $n$, we proceed by induction on $j$. Note that the base case ($j = 2$) is true since 
    $$
    (0) \sim [a, 0, \dots, 0] \sim (x, ax, 0, \dots, 0) \hs a, x \in \F_q, x \neq 0
    $$
    Thus, we may also assume $n \geq 3$. Suppose that for a fixed $n \geq 3$, the lemma holds up to $j \leq n-1$. If $j$ is odd, then by the inductive hypothesis, we have a point $(q) = (q_1, \dots, q_{j-1}, 0, \dots)$ at distance $j-1$ from $(0)$ adjacent to a line $[\ell] = [\ell_1, \dots, \ell_j, 0, \dots]$ at distance $j$ from $(0)$ satisfying $\ell_{j- 1} \neq 0$ and $\ell_{j} = q_1\ell_{j-1}$. Next, we note that if $(p) = (p_1, \dots, p_n)$ is another neighbor of $[\ell_1, \dots, \ell_j, 0, \dots]$, then since $\ell_m = 0$ for $j < m \leq n$ we have
    \begin{align*}
        p_j + \ell_j &= p_1\ell_{j-1} \\
        p_{j + 1}  &= p_j\ell_1 \\
        p_{j + 2} &= p_1\ell_{j+1} = 0  \hspace{1em} (\text{since $\ell_{j + 1} = 0$})\\
        p_{j + 3} &= p_{j+2}\ell_1 = 0 \hspace{1em} (\text{since $p_{j + 2} = 0$})\\
        &\vdots
    \end{align*}
    Thus $p_m = 0$ for $m > j + 1$. Furthermore, using the fact that $\ell_{j} = q_1\ell_{j-1}$ we obtain
    $$
    p_{j} = p_1\ell_{j-1} - \ell_j = (p_1 - q_1)\ell_{j-1}.
    $$
    Since $\ell_{j-1} \neq 0$ (by the inductive hypothesis) and $p_1 \neq q_1$ since both $(p)$ and $(q)$ are distinct neighbors of $[\ell]$, then $p_{j} \neq 0$ as claimed. Furthermore, $p_j \neq 0$ implies that $(p)$ cannot be at distance $j -1$ as it would contradict the inductive hypothesis, and so  $(p)$ must be at distance $j  + 1$.
    
    Now, if we instead begin with $j$ even, we can swap the roles of points and lines and perform a similar argument to prove this case. Together, these two cases finish the proof of the lemma.
\end{proof}

\begin{thm}
    For $n \geq 2$, the length of the shortest cycle in $A(n, q)$ containing the point $(0)$ is at least $2n + 2$.
\end{thm}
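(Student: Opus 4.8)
The plan is to argue by contradiction. Since $A(n,q)$ is bipartite, a cycle through $(0)$ has even length, say $2m$, and $m \ge 2$ because the graph is simple; I would assume $m \le n$ and derive a contradiction. Write such a cycle as $(0) = v_0, v_1, \dots, v_{2m} = v_0$, with indices read modulo $2m$. The whole argument rests on a single fact extracted from the proof of the preceding lemma: for every $j$ with $2 \le j \le n$, a vertex at distance $j$ from $(0)$ has \emph{exactly one} neighbor at distance $j-1$ from $(0)$. Indeed, the displayed computation in that proof shows that a neighbor $(p)$ of a distance-$j$ line $[\ell]$ (where $[\ell]$ is adjacent to the distance-$(j-1)$ point $(q)$) satisfies $p_j = (p_1 - q_1)\ell_{j-1}$ with $\ell_{j-1} \neq 0$, so $p_j = 0$ — equivalently $(p) = (q)$, the distance-$(j-1)$ neighbor — for exactly one value of $p_1$; the case of a distance-$j$ point is symmetric, and this same one-variable computation is valid verbatim when $j = n$.

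Granting this, the first step is to prove by induction on $j$, for $0 \le j \le m$, that $d((0), v_j) = j$. The cases $j = 0, 1$ are immediate, and $j = 2$ requires only $v_2 \neq v_0$, which holds since $m \ge 2$. For $3 \le j \le m$, the inductive hypothesis gives $d((0), v_{j-1}) = j-1$ and $d((0), v_{j-2}) = j-2$; since $v_j \sim v_{j-1}$ we have $d((0), v_j) \in \{j-2, j\}$, and if it were equal to $j-2$ then $v_j$ and $v_{j-2}$ would be two neighbors of $v_{j-1}$ at distance $j-2$ from $(0)$. Because $2 \le j-1 \le n$, the fact above would force $v_j = v_{j-2}$, which is impossible on a cycle of length $2m \ge 4$. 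Hence $d((0), v_j) = j$.

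Taking $j = m$ yields $d((0), v_m) = m$; moreover $d((0), v_{m-1}) = m-1$ (already proved), and $d((0), v_{m+1}) = m-1$ as well, the latter because $v_{m+1}$ is a neighbor of $v_m$ and lies within cycle-distance $m-1$ of $(0)$ along the other arc, so its distance is $m-1$ or $m+1$ and at most $m-1$. Thus $v_{m-1}$ and $v_{m+1}$ are two distinct neighbors of $v_m$ at distance $m-1$ from $(0)$, contradicting the uniqueness fact since $2 \le m \le n$. This contradiction gives $2m \ge 2n+2$, which is the claim.

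The only place requiring care is the uniqueness statement, and the difficulty there is essentially bookkeeping: the preceding lemma runs its induction only up to distance $n-1$ in order to reach distance $n$, so one must note separately (as above) that the identical computation also pins down the unique distance-$(n-1)$ neighbor of a distance-$n$ vertex — this is exactly what is needed in the final step when $m = n$. Everything else is routine: evenness of cycle lengths in a bipartite graph, the distinctness of $v_i$ and $v_{i \pm 2}$ on a cycle of length at least $4$, and the elementary bound $d((0), v_i) \le \min(i, 2m-i)$.
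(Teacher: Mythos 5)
Your proof is correct, and it reaches the conclusion by a genuinely different route than the paper, even though both arguments ultimately rest on the same algebraic fact. The paper proves the theorem by induction on $n$: the base cases $n=2,3$ are handled by identifying $A(n,q)$ with $D(2,q)$ and $D(3,q)$ (girths $6$ and $8$), the covering map from $A(n+1,q)$ onto $A(n,q)$ rules out cycles through $(0)$ of length strictly less than $2n+2$, and only the borderline length $2n+2$ is excluded directly, via Lemma 2.2 and the computation $p_{n+1}=p_n\ell_1=p_nk_1$ with $p_n\neq 0$. You instead distill from the proof of Lemma 2.2 the statement that every vertex at distance $j$ from $(0)$, $2\le j\le n$, has a \emph{unique} neighbor at distance $j-1$, and then run a single uniform ``distances strictly increase to the midpoint of the cycle'' argument. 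This buys you a self-contained proof with no induction on $n$, no appeal to the covering maps between consecutive $A(n,q)$'s, and no reliance on the known girths of the small $D(n,q)$; it also makes explicit a step the paper leaves implicit, namely why the vertex of the cycle antipodal to $(0)$ must sit at distance exactly half the cycle length with its two cycle-neighbors one step closer. The one point requiring care --- that the uniqueness computation remains valid at the top index $j=n$, which Lemma 2.2's induction reaches but does not go beyond --- is exactly the content of the paper's final displayed contradiction, and you flag and handle it correctly. The remaining bookkeeping (bipartiteness, the bound $d((0),v_i)\le\min(i,2m-i)$, and the distinctness of $v_{m-1}$ and $v_{m+1}$ on a cycle of length at least $4$) is all in order.
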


\begin{proof}
    We proceed by induction on $n$. For $n = 2, 3$, the graphs $A(n, q)$ are the same as the graphs $D(2, q)$ and $D(3, q)$ whose girths are known to be $6$ and $8$ respectively. Therefore, beginning at $n = 2$, the base case is done. 

    Suppose that no cycle of length less than $2n + 2$ in $A(n, q)$ contains the point $(0)$. Using the covering map from $A(n+1, q)$ to $A(n, q)$ which ``deletes" the $(n+1)^{\text{st}}$ coordinate and $n^{\text{th}}$ equation, we immediately obtain that $A(n + 1, q)$ has no cycles of length less than $2n + 2$ through the point $(0)$. All that is left is to demonstrate that no cycle of length $2n + 2$ in $A(n + 1, q)$ contains the point $(0)$. 

    If $n + 1$ is even, then $n$ is odd, and by Lemma 2.2, we have that any line at distance $n$ from $(0)$ is of the form 
    \begin{align*}
        [\ell_1, \dots, \ell_{n}, 0], \hspace{1cm} \ell_{n - 1} \neq 0.
    \end{align*}
    A point at distance $n + 1$ is of the form 
    $$
    (p_1, \dots, p_{n+1}), \hspace{1cm} p_{n} \neq 0.
    $$
    Suppose that $A(n + 1, q)$ has a cycle of length $2n+2$ containing the point $(0)$. Then there must exist a point at distance $n + 1$ from $(0)$ with two common neighbors in the set of all lines at distance $n$ from $(0)$. Denote these lines $[\ell_1, \dots, \ell_{n}, 0]$ and $[k_1, \dots, k_{n}, 0]$. Then as $\ell_{n + 1} = k_{n + 1} = 0$, we have
    $$
    p_{n+ 1} = p_{n}\ell_1 = p_{n}k_1.
    $$
    As $p_{n} \neq 0$, it must be that $\ell_1 = k_1$, which contradicts the fact that the lines $[\ell]$ and $[k]$ are distinct neighbors of $(p)$.

    If $n+1$ is odd, then again as in the proof of Lemma 2.2, swapping the roles of points and lines, we perform a completely similar argument, finishing the proof of the theorem.
\end{proof}

\noindent We are now ready to prove the main result of this paper.

\begin{thm}
    The girth of $D(n, q)$ is at least $n + 5$ when $n$ is odd, and at least $n + 4$ when $n$ is even.
\end{thm}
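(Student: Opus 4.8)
The plan is to treat the odd and even cases separately. For $n=2k+1$ I will combine the covering map of Lemma 2.1, the preceding theorem, and the transitivity of $D(2k+1,q)$ on its set of points; the even case then reduces to the odd one. Indeed, $D(2,q)$ has girth $6=2+4$, and if $n=2k\ge 4$ then by the Remark the graph $D(n,q)$ is a lift of $D(n-1,q)$, so its girth is at least that of $D(n-1,q)$, which is at least $(n-1)+5=n+4$ once the odd case is established.

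So fix $n=2k+1$ and aim for girth at least $n+5=2k+6$. First I would note that this girth equals the length of a shortest cycle through the point $(0)$: the graph $D(2k+1,q)$ is bipartite with parts $P$ and $L$, so every cycle contains a point, and since $D(2k+1,q)$ acts transitively on $P$, an automorphism carries a shortest cycle to a cycle of the same length through $(0)$. By Lemma 2.1 there is a covering map $\Phi\colon D(2k+1,q)\to A(k+2,q)$ with $\Phi((0))=(0)$, and by the preceding theorem $A(k+2,q)$ has no cycle through $(0)$ of length less than $2(k+2)+2=2k+6$.

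The heart of the matter is a basepoint-sensitive refinement of the two bulleted consequences of a covering map, namely: \emph{if $\Phi\colon\Gamma\to\Gamma'$ is a covering map and $v\in V(\Gamma)$, then every cycle through $v$ in $\Gamma$ has length at least the length of a shortest cycle through $\Phi(v)$ in $\Gamma'$.} Writing $g'$ for the latter length, the key point is that $\Phi$ restricts to a bijection from the ball $B_\Gamma(v,r)$ onto the ball $B_{\Gamma'}(\Phi(v),r)$ whenever $r<g'/2$; this follows by induction on $r$, using that $\Phi$ is bijective on every $1$-neighbourhood, because a failure of injectivity on $B_\Gamma(v,r)$ would, by tracing shortest paths back to $v$, force a cycle through $\Phi(v)$ of length at most $2r<g'$. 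Since all vertices of a cycle of length $\ell<g'$ through $v$ lie in $B_\Gamma(v,\lfloor\ell/2\rfloor)$, and $\lfloor\ell/2\rfloor<g'/2$, such a cycle would map injectively to a cycle through $\Phi(v)$ of length $\ell<g'$, which is impossible. Applying this with $v=(0)$ to the covering map above shows that the girth of $D(2k+1,q)$ is at least $2k+6=n+5$, completing the odd case.

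The step I expect to require the most care is this last lemma: verifying the ball-bijectivity statement with the correct radius and, in particular, dealing with the degenerate cases of small $r$, where one must appeal directly to the injectivity of $\Phi$ on a single neighbourhood rather than to the inductive hypothesis. The remainder is just bookkeeping with results already in hand: Lemma 2.1, the preceding theorem, the Remark, and transitivity of $D(2k+1,q)$ on its points.
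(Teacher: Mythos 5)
Your overall architecture coincides with the paper's: reduce to cycles through $(0)$ via point-transitivity, push such a cycle through the covering map of Lemma 2.1 into $A(k+2,q)$, invoke Theorem 2.3, and settle even $n$ by viewing $D(n,q)$ as a lift of $D(n-1,q)$. You are also right that the delicate point is the basepointed covering statement (the paper itself asserts without comment that the image cycle ``contains the point $(0)$''). Unfortunately, the lemma you isolate as the heart of the matter is false as stated, and its proof breaks at exactly the step you flag. Take $\Gamma'$ to be two triangles, one through $u$ and one through $w$, joined by a path $u - v' - w$, and let $\Gamma$ be the double cover in which each triangle unwinds to a hexagon and the bridge edges lift to $v'_i u_i$ and $v'_i w_i$ for $i=1,2$. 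There is no cycle through $v'$ at all, so your $g'$ is infinite and your ball-bijectivity claim would force $\Phi$ to be injective on the entire component of $v'_1$; yet $v'_1 \to u_1 \to \cdots \to u_2 \to v'_2 \to w_2 \to \cdots \to w_1 \to v'_1$ is a $10$-cycle through $v'_1$. The inductive step fails because concatenating the $\Phi$-images of two shortest paths from $v$ to a colliding pair yields only a \emph{closed walk} through $\Phi(v)$; the cycle that walk contains may avoid $\Phi(v)$ entirely (here the two image paths travel together along the bridge edge $v'u$ before separating, and the extracted cycle is the triangle at $u$). Your argument is correct if $g'$ is taken to be the global girth of $\Gamma'$, but that is precisely what is unavailable: Theorem 2.3 controls only cycles through $(0)$ in $A(n,q)$, whose global girth is smaller than $2n+2$.

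The repair is to use what the image of a cycle through $v$ actually is: a \emph{non-backtracking} closed walk of the same length based at $\Phi(v)$, since injectivity of $\Phi$ on each neighbourhood forbids $\Phi(x_{i-1})=\Phi(x_{i+1})$, including at the wrap-around. Such a walk of length $\ell$ is confined to $B_{\Gamma'}(\Phi(v),\lfloor \ell/2\rfloor)$, and no non-backtracking closed walk of positive length exists in a tree. So the statement to carry out of Section 2 is not ``no short cycle through $(0)$'' but the stronger fact that the proofs of Lemma 2.2 and Theorem 2.3 actually establish: every vertex of $A(n,q)$ at distance $j$ from $(0)$, $1\le j\le n+1$, has a unique neighbour at distance $j-1$, so the ball of radius $n+1$ about $(0)$ induces a tree. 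With that reformulation your deduction of the odd case goes through verbatim (and it also closes the same gap in the inductive step of Theorem 2.3, where a cycle through $(0)$ in $A(n+1,q)$ is pushed down to $A(n,q)$). The transitivity reduction and the even case are fine as you have them.
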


\begin{proof}
    Let $n = 2k + 1$ and $k \geq 1$. It is known that $D(2k + 1, q)$ is transitive on the set of points and lines. Therefore, it contains a cycle of length $2k + 4$ or less, if and only if it contains a cycle of the same length through the point $(0)$. Lemma 2.1 gives a covering map $\Phi$ from $D(2k + 1, q)$ to $A(k +2, q)$ such that $\Phi((0)) = (0)$. Therefore, if a cycle of length $2k + 4$ or less exists in $D(2k + 1, q)$, then a cycle of the same length or less would exist in $A(k+2, q)$ and contain the point $(0)$, which contradicts Theorem 2.3. Thus the girth of $D(2k + 1, q)$ is at least $2k + 6 = n + 5$. If $n = 2k + 2$, then we know that the girth of $D(2k + 2, q)$ is greater than or equal to the girth of $D(2k + 1, q)$, which we have shown is at least $2k + 6 = n + 4$.
\end{proof}

As we have observed the equations governing the edge relations of $A(n,q)$ are in fact a subset of the equations governing the edge relations of $D(2n - 3, q)$. The remaining equations of $D(2n -3, q)$ are present for the purpose of making the graphs edge transitive (as well as point transitive and line transitive). It also turned out that some of these remaining equations cause the graphs $D(2n - 3, q)$ to disconnect, which was unexpected. The graphs $A(n, q)$ can in a sense be seen as the frame around which $D(2n-3, q)$ is built. It would be worthwhile to investigate other families of graphs that can serve as potential ``frames" around which new high girth families of graphs can be built.

\section{Conclusion and Open Questions}

It is our belief that researchers have only scratched the surface in regards to applying algebraically defined graphs and their generalizations to various problems in graph theory. We hope that this paper makes this area more accessible to those interested in studying such problems. We leave here several problems and conjectures of interest.

It has been observed computationally that the graphs $A(n, q)$ are connected for $q = 3, 4, 5, 7$ and $2 \leq n \leq 10$. It seems likely that this is always the case when $q > 2$. If this is not the case, their components may be of interest, just like it happened with the components $CD(n, q)$ of $D(n, q)$.

\begin{conj}
    The graphs $A(n, q)$ are connected for all prime powers $q > 2$ and all $n \geq 2$.
\end{conj}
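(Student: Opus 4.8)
I would attack the conjecture by induction on $n$, using the tower of covering maps $\cdots \to A(n,q)\to A(n-1,q)\to\cdots\to A(2,q)$, where each map forgets the last coordinate and the last equation (these are covering maps by the Remark in Section 1). The base cases are clear: $A(2,q)$ and $A(3,q)$ coincide with $D(2,q)$ and $D(3,q)$, which are connected since $n\le 5$; for $A(2,q)$ one can also check directly that every line is at distance at most three from $(0)$ --- via $(0)\sim[a,0]\sim(d/(c-a),\,ad/(c-a))\sim[c,d]$ for any $a\neq c$ --- and every point at distance at most four. For the inductive step, assume $A(n-1,q)$ is connected and let $\pi\colon A(n,q)\to A(n-1,q)$ be the covering that forgets the $n$-th coordinate.

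Since the base $A(n-1,q)$ is connected, $A(n,q)$ is connected if and only if the monodromy permutations of $\pi$ act transitively on a fibre $\pi^{-1}(v_0)\cong\F_q$, where each closed walk $W$ based at $v_0$ in $A(n-1,q)$ induces a permutation of $\pi^{-1}(v_0)$ by unique path lifting. The crucial structural fact is that this permutation is always a \emph{translation}: the defining equation for the $n$-th coordinate has the form $p_n+\ell_n=B$ with $B$ a function of the first $n-1$ coordinates only (namely $B=p_{n-1}\ell_1$ or $B=p_1\ell_{n-1}$), so following the lift of $W$ and tracking the $n$-th coordinate one finds that the lift starting at $(v_0,a)$ ends at $(v_0,\,a+C(W))$, where $C(W)\in\F_q$ is the alternating sum of the values $B$ met along $W$ and depends only on $W$ (in fact only on its class in the cycle space). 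Hence $H:=\{C(W):W\text{ a closed walk in }A(n-1,q)\}$ is a subgroup of $(\F_q,+)$, and it remains exactly to show $H=\F_q$.

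To generate elements of $H$ I would exploit the diagonal automorphisms $\sigma_t\colon(x_1,x_2,x_3,\dots)\mapsto(tx_1,t^2x_2,t^3x_3,\dots)$, $t\in\F_q^{*}$, of $A(n-1,q)$ (the pattern $x_j\mapsto t^jx_j$ is respected by every defining equation). A short computation gives $C(\sigma_t(W))=t^{n}C(W)$. So a single closed walk $W_0$ with $C(W_0)\neq0$ already forces $H$ to contain $\{t^{n}C(W_0):t\in\F_q^{*}\}\cup\{0\}$, whose $\F_p$-span has dimension controlled by $\gcd(q-1,n)$; for prime $q$ it is all of $\F_q$ as soon as $C(W_0)\neq0$, and in general it is all of $\F_q$ unless $\gcd(q-1,n)$ is large, in which case one must instead produce several closed walks whose monodromy constants are $\F_p$-linearly independent.

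The main obstacle is this remaining input: constructing an explicit closed walk in $A(n-1,q)$ --- most naturally a short cycle through $(0)$, which has length at least $2n$ by Theorem 2.3 and which Lemma 2.2 lets one parametrise --- and honestly computing its monodromy constant $C$ to confirm $C\neq0$ (and, for composite $q$, assembling an $\F_p$-spanning family of such constants). This is precisely where the hypothesis $q>2$ must enter: in characteristic $2$ the alternating sum defining $C$ loses its signs, and one expects, mirroring the known disconnectedness of $D(n,2)$, that every closed walk has $C=0$ once $n$ is large, so that $A(n,2)$ is disconnected. Pinning down why odd characteristic keeps $C$ from always vanishing is the heart of the matter; the covering-and-monodromy reductions above are routine by comparison. (One could instead hope to deduce the conjecture from the covering $D(2n-3,q)\to A(n,q)$ together with the known description of the components $CD(2n-3,q)$, but this appears to require that a single component of $D(2n-3,q)$ surjects onto $A(n,q)$, which is unclear and must fail when $q=2$.)
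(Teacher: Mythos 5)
This statement is an open conjecture in the paper; there is no proof of it to compare against, only the reported computational evidence for $q\in\{3,4,5,7\}$ and $2\le n\le 10$. Your reductions are sound as far as they go: the tower of coverings $A(n,q)\to A(n-1,q)$ exists by the Remark in Section 1, the monodromy of each covering acts by translations of the fibre $\F_q$ because the $n$-th equation has the form $p_n+\ell_n=B$ with $B$ depending only on lower coordinates, the set $H$ of monodromy constants is therefore an additive subgroup, and the diagonal maps $x_j\mapsto t^jx_j$ are indeed automorphisms fixing $(0)$ that scale the constants by $t^n$. But the step you defer is the entire content of the conjecture. The equivalence ``$A(n,q)$ is connected iff some closed walk in $A(n-1,q)$ has $C(W)\neq0$ (and, for non-prime $q$, iff the constants span $\F_q$ over $\F_p$)'' is an exact reformulation rather than a reduction: if every closed walk had $C(W)=0$, the cover would simply be disconnected, so producing one walk with $C(W)\neq0$ is precisely where the equations of $A(n,q)$ must actually be engaged, and you give no candidate walk and no computation. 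By Theorem 2.3 any closed walk through $(0)$ in $A(n-1,q)$ has length at least $2n$, so this computation is not a formality, and Lemma 2.2 only constrains the shape of vertices on such a walk --- it does not hand you one.

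A second concrete loose end: even granting a walk with $C(W_0)\neq0$, your argument for $H=\F_q$ only works cleanly for prime $q$. For $q=4$ (which the conjecture covers) and $3\mid n$, the $n$-th powers in $\F_4^{*}$ reduce to $\{1\}$, so $\{t^nC(W_0):t\in\F_q^{*}\}\cup\{0\}$ spans only a one-dimensional $\F_2$-subspace, and you would need a second, independent walk that you have not produced. So the proposal is a reasonable plan of attack --- it mirrors the monodromy/voltage analysis used to determine the components of $D(n,q)$ in the cited work of Lazebnik, Ustimenko, Woldar, and Viglione --- but it does not prove the statement, a gap you candidly acknowledge yourself.
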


\noindent The graphs $D(n, q)$ have been conjectured by Ustimenko to be nearly Ramanujan, that is their second largest eigenvalues is less than or equal to $2\sqrt{q}$. This conjecture has been verified for $k = 2, 3, 4$ for all prime powers $q$, and $k = 5$ for all odd prime powers $q$, see Li, Lu, and Wang \cite{LLW}, Cioab\u{a}, Lazebnik and Li \cite{CLL}, Moorhouse, Sun, and Williford \cite{MSW}, Gupta and Taranchuk \cite{GT}. Since $D(2k + 1, q)$ is lift of $A(k + 2, q)$, then the spectrum of $A(k +2, q)$ embeds into the spectrum of $D(2k + 1, q)$ \cite{LSW}. The graphs $A(2, q)$ and $A(3, q)$ are precisely the same graphs as $D(2, q)$ and $D(3, q)$, or the Wenger graphs $W_1(q)$ and $W_2(q)$ see \cite{CLL}, whose spectrum is known. $D(5, q)$ is a lift of $A(4, q)$, and therefore, for all odd prime powers $q$, the second largest eigenvalue of $A(4, q)$ is less than or equal to $2\sqrt{q}$ \cite{GT}. Ustimenko's conjecture regarding the second largest eigenvalue of $D(n, q)$ would imply the same for the family $A(n, q)$.

\begin{conj}
    The second largest eigenvalue of $A(n, q)$ is less than or equal to $2 \sqrt{q}$.
\end{conj}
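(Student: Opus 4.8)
The plan is to recast the conjecture as a bound on singular values. Since $A(n,q)$ is bipartite and $q$-regular, its adjacency matrix has the block form $\bigl(\begin{smallmatrix}0&M\\ M^{T}&0\end{smallmatrix}\bigr)$ with $M$ the $q^n\times q^n$ point–line incidence matrix, so its eigenvalues are $\pm\sigma$ for $\sigma$ a singular value of $M$, and the conjecture asserts that every singular value of $M$ other than $q$ is at most $2\sqrt q$. One reduction is immediate: by Lemma 2.1 the graph $D(2n-3,q)$ is a lift of $A(n,q)$, so $\operatorname{Spec}\bigl(A(n,q)\bigr)\subseteq\operatorname{Spec}\bigl(D(2n-3,q)\bigr)$, and Conjecture 3.2 therefore follows from Ustimenko's conjecture for $D(2n-3,q)$; in particular it holds already for $A(2,q),A(3,q)$ (Wenger graphs, \cite{CLL}) and for $A(4,q)$ with $q$ odd, via $D(5,q)$ \cite{GT}. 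For a self-contained proof I would instead bound the singular values of $M$ directly by Fourier analysis over $\F_q^n$.

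For the direct approach, fix a nontrivial additive character $\psi$ of $\F_q$ and use the characters $e_a(x)=\psi(a\cdot x)$, $a\in\F_q^n$, as an orthogonal eigenbasis of $\C^{\F_q^n}$. The structural feature to exploit is that the defining system of $A(n,q)$ is triangular: the neighbor of a point with a prescribed first line-coordinate is a single explicit vector-valued polynomial in the point and that coordinate, and symmetrically from the line side. Pushing the basis $\{e_a\}$ through $M$ then writes each $Me_a$ as a combination of the $e_b$ whose coefficients are one-variable exponential sums in the free first coordinate, and the expectation — realized for $A(2,q)$ and $A(3,q)$ in the work of Cioab\u{a}, Lazebnik and Li \cite{CLL} — is that the resulting nonzero singular values of $M$ are powers of $q$ times Gauss or Kloosterman sums, so that Weil's bound $|K(a,b;q)|\le 2\sqrt q$ yields exactly the constant $2\sqrt q$. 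A cleaner bookkeeping may be to induct on $n$ via the $\F_q$-covering $A(n+1,q)\to A(n,q)$ of the Remark: treating the last coordinate as a voltage in $\F_q$ gives $\operatorname{Spec}\bigl(A(n+1,q)\bigr)=\bigcup_{\chi\in\widehat{\F_q}}\operatorname{Spec}(M_\chi)$, where $M_\chi$ is the incidence matrix of $A(n,q)$ with the edge $(p)\sim[\ell]$ weighted by $\chi$ of the relevant monomial ($p_n\ell_1$ or $p_1\ell_n$). The trivial $\chi$ reproduces $\operatorname{Spec}(A(n,q))$, handled by the inductive hypothesis, so the whole content is the claim $\|M_\chi\|\le 2\sqrt q$ for every nontrivial $\chi$, with base cases $n\le 3$ (and $n=4$, $q$ odd) already in hand.

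The hard part will be that the defining equations are not linear: the monomials $p_{j-1}\ell_1$ and $p_1\ell_{j-1}$ couple point and line variables multiplicatively, so the character decomposition does not diagonalize $M$ but only puts it into a staircase form, and unwinding the staircase leaves an exponential sum in one or two variables whose degree grows with $n$. To reach the sharp constant $2\sqrt q$ rather than some $C_n\sqrt q$ or $q^{1/2+o(1)}$, one must show that this sum is, after a suitable change of variables, essentially a (possibly twisted) Kloosterman sum — that the higher-degree contributions cancel or are absorbed — and then dispose of the finitely many exceptional frequencies $\chi$ that could otherwise contribute a large singular value. This last point is delicate and is entangled with Conjecture 3.1: a disconnected $A(n,q)$ would have second eigenvalue $q$, so a proof of Conjecture 3.2 must also establish connectivity of $A(n,q)$ for $q>2$. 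I would therefore expect the exponential-sum estimate and the connectivity question to be the two genuine obstacles, with the voltage-graph induction serving only as scaffolding once both are under control.
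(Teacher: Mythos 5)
The statement you are addressing is Conjecture 3.2 of the paper, not a theorem: the paper offers no proof of it, and explicitly presents it as an open problem whose only known evidence is the conditional reduction you describe in your first paragraph (the spectrum of $A(n,q)$ embeds into that of its lift $D(2n-3,q)$, so the bound is inherited from the known cases $D(2,q)$, $D(3,q)$, $D(4,q)$, and $D(5,q)$ for odd $q$, and would follow in general from Ustimenko's unproven conjecture). Your write-up reproduces exactly the reasoning the paper itself gives in the paragraph preceding the conjecture, and then goes further by sketching a program; but a program is not a proof, and by your own account the central step is missing.

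Concretely, the gap is the claim $\|M_\chi\|\le 2\sqrt q$ for every nontrivial character $\chi$ in the voltage decomposition of the cover $A(n+1,q)\to A(n,q)$. Everything in your argument funnels into this single estimate, and nothing in the proposal establishes it: you observe (correctly) that the monomials $p_{j-1}\ell_1$ and $p_1\ell_{j-1}$ couple the two sides multiplicatively, so the character basis does not diagonalize $M_\chi$, and you then \emph{hope} that the resulting exponential sums reduce to Kloosterman sums after a change of variables. For $n=2,3$ this is the content of the Cioab\u{a}--Lazebnik--Li computation for Wenger graphs, but already for $D(4,q)$ and $D(5,q)$ the known proofs required substantially more than Weil's bound, and no mechanism is given here for why the higher-degree contributions should cancel for general $n$. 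Your observation that Conjecture 3.2 subsumes Conjecture 3.1 (a disconnected $q$-regular bipartite graph has second eigenvalue $q>2\sqrt q$ for $q>4$) is a good sanity check on the difficulty, but it also means your inductive scaffolding cannot close without independently resolving connectivity, which is likewise open. In short: the reduction to Ustimenko's conjecture is correct but conditional, and the self-contained route is an outline of where a proof would have to go rather than a proof.
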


\section{Acknowledgements}
We wish to thank Boris Bukh and Felix Lazebnik for bringing to our attention the preprint of Ustimenko \cite{U22}. The author enjoyed many useful discussions with Felix Lazebnik regarding algebraically defined graphs and appreciates his feedback on the previous versions of this paper. We also thank Grahame Erskine \cite{E} for independently verifying and extending our computer results regarding the girth of $A(n, q)$ for small values of $n$ and $q$. This work was partially supported by the Simons Foundation Award ID: 426092 and the National Science Foundation Grant: 1855723.


\begin{thebibliography}{99}

    \bibitem{AHL} N. Alon, S. Hoory and N. Linial, {\em Moore bound for irregular graphs}, Graphs and Combinatorics, 18(1), (2002): 53–57.

    \bibitem {B} B. Bollob\'{a}s, {\em Modern Graph Theory}, Springer-Verlag New York Inc. (1998).

    \bibitem{BS}
     J. A. Bondy and M. Simonovits, {\em Cycles of even length in graphs}, Journal of Combinatorial Theory, Series B, 16, (1974): 97–105.

    \bibitem{CCT1}
    X. Cheng, W. Chen, and Y. Tang, {\em On the girth of the bipartite graph D(k, q)}, Discrete Mathematics, 335(2014): 25–34.

    \bibitem{CCT2}
    X. Cheng, W. Chen, and Y. Tang, {\em On the conjecture for the girth of the bipartite graph D(k, q)}, Discrete Mathematics, 339, (2016): 2384–2392.

    \bibitem{CTX}
    X. Cheng, Y. Tang, and M. Xu, {\em Girth of the algebraic bipartite graph $D(k,q)$}, {\tt https://arxiv.org/abs/2209.01896}, (2022).

    \bibitem{CLL}
    S. M. Cioab\u{a}, F. Lazebnik, and W. Li, {\em On the spectrum of Wenger graphs}, Journal of
    Combinatorial Theory Series B, 107, (2014): 132–139.

    \bibitem{E}
    G. Erskine, {\em Private Communication}, (December 2022).

    \bibitem{FLSUW}
    Z. F\"{u}redi, F. Lazebnik, A. Seress, V.A. Ustimenko, and A.J. Woldar, {\em Graphs of prescribed girth and bi-degree}, Journal of Combinatorial Theory, Series B, 64(2), (1995): 228–239.

    \bibitem{FS}
    Z. F\"{u}redi and  M. Simonovits, {\em  The History of Degenerate (Bipartite) Extremal Graph Problems},  Erd\"{o}s Centennial Bolyai Society Mathematical Studies, 25, (2013).


    \bibitem{GT}
    H. Gupta and V. Taranchuk, {\em On the eigenvalues of the graphs $D(5, q)$}, {\tt https://arxiv.org/abs/2207.04629}, (2022).



    \bibitem{LSW}
    F. Lazebnik, S. Sun, and Y. Wang, {\em Some families of graphs, hypergraphs and digraphs
    defined by systems of equations: a survey}, Selected topics in graph theory and
    its applications: Lecture Notes in Seminal Interdisciplinary Mathematics, 14, (2017): 105–142.

    \bibitem {LU} F. Lazebnik and V. A. Ustimenko, {\em Explicit construction of graphs with an arbitrary large girth and of large size}, Discrete Applied Mathematics, 60, (1995): 275–284.

    

    \bibitem {LUW} F. Lazebnik, V. A. Ustimenko, and A. J. Woldar, {\em A new series of dense graphs of high girth}, Bulletin of the American Mathematical Society, 32(1), (1995): 73-79.
    
    \bibitem{LUW2}
    F. Lazebnik, V. A. Ustimenko, and A. J. Woldar{\em A Characterization of the Components of the graphs $D(k,q)$},
    Discrete Mathematics, 157, (1996): 271–283.

    \bibitem{LV}
    F. Lazebnik and R. Viglione, {\em On the connectivity of certain graphs of high girth}, Discrete Mathematics, 277, (2004): 309–319.
    
    \bibitem{LW}
    F. Lazebnik and A.J. Woldar, {\em General properties of some families of graphs defined
    by systems of equations}, Journal of Graph Theory, 38 (2001): 65-86.

     \bibitem{LLW}
    W.-C. W. Li, M. Lu, and C. Wang, {\em Recent developments in low-density parity-check codes},
    Coding and cryptology, 107–123, Lecture Notes in Computer Science, 5557,
    (2009).

    \bibitem{LPS} A. Lubotzky, R. Phillips, and P. Sarnak, {\em Ramanujan graphs}, Combinatorica, 8, (1988): 261-277.

    \bibitem {Ma} G. A. Margulis, {\em Explicit group-theoretical constructions of combinatorial schemes and their application to the design of expanders and concentrators}, Journal of Probability and Information Transmission, 24(1), (1988): 39–46.

    \bibitem{MOR} M. Morgenstern, {\em Existence and explicit constructions of q + 1 regular Ramanujan graphs for every prime power q},  Journal of Combinatorial Theory Series B, 62, (1994): 44-62.

    \bibitem{MSW} G. E. Moorhouse, S. Sun, and  J. Williford, {\em The Eigenvalues of the Graphs $D(4, q)$}, Journal of Combinatorial Theory Series B, 125, (2017): 1–20.

    \bibitem{U05}
    V. A. Ustimenko, {\em Maximality of affine group and hidden graph cryptsystems}, Journal of Algebra and Discrete Mathematics, October, 10, (2004): 51-65. 

    \bibitem{U07}
    V. A. Ustimenko, {\em On linguistic dynamical systems, families of graphs of large girth, and cryptography}, Journal of Mathematical Sciences, 140, (2007), 461-471.

    \bibitem{U13}
    V. A. Ustimenko, {\em On the extremal graph theory and symbolic computations}, Reports
    of the National Academy of Sciences of Ukraine, 2, (2013): 42-49. (\textit{in Russian})

    \bibitem{U22}
    V. A. Ustimenko, {\em New results on algebraic graphs of large girth and their impact on Extremal Graph Theory and Algebraic Cryptography}, {\tt https://eprint.iacr.org/2022/1489.pdf}, (2022).

    \bibitem{UP}
    V. A. Ustimenko, {\em Personal Communication}, (November 2022).
    
\end{thebibliography}
\end{document}